\newtheorem{thm}{Theorem}[section]
\newtheorem{lem}[thm]{Lemma}
\newtheorem{obs}[thm]{Observation}
\newenvironment{proof}[1][Proof]{\noindent\textbf{#1.} }{\hfill\rule{1mm}{2mm}}
\makeatletter \@addtoreset{equation}{section} \makeatother
\begin{document}
\title
{Transitivity of Varietal Hypercube Networks
\thanks{The work was
supported by NNSF of China (No. 61272008).} }

\author
{ Li Xiao \quad Jin Cao \quad Jun-Ming Xu\footnote{Corresponding
author, E-mail address: xujm@ustc.edu.cn (J.-M. Xu)}
\\ \\
{\small School of Mathematical Sciences}\\
{\small University of Science and Technology of China}\\
{\small  Wentsun Wu Key Laboratory of CAS}  \\
{\small Hefei, Anhui, 230026, China} }

\date{} \maketitle

\begin{minipage}{140mm}

\begin{center} {\bf Abstract} \end{center}

The varietal hypercube $VQ_n$ is a variant of the hypercube $Q_n$
and has better properties than $Q_n$ with the same number of edges
and vertices. This paper proves that $VQ_n$ is vertex-transitive.
This property shows that when $VQ_n$ is used to model an
interconnection network, it is high symmetrical and obviously
superior to other variants of the hypercube such as the crossed
cube.

\vskip0.4cm \noindent {\bf Keywords}\quad Combinatorics, graphs,
transitivity, varietal hypercube networks

\vskip0.4cm \noindent {\bf AMS Subject Classification: }\ 05C60\quad
68R10

\end{minipage}

\vskip0.6cm

\section{Introduction}

We follow~\cite{x03} for graph-theoretical terminology and notation
not defined here. A graph $G=(V,E)$ always means a simple undirected
graph, where $V=V(G)$ is the vertex-set and $E=E(G)$ is the edge-set
of $G$. It is well known that interconnection networks play an
important role in parallel computing/communication systems. An
interconnection network can be modeled by a graph $G=(V, E)$, where
$V$ is the set of processors and $E$ is the set of communication
links in the network.

The hypercube network $Q_n$ has proved to be one of the most popular
interconnection networks since it has a simple structure and has
many nice properties. The varietal hypercubes are proposed by Cheng
and Chuang~\cite{cc94} in 1994 as an attractive alternative to $Q_n$
when they are used to model the interconnection network of a
large-scale parallel processing system.

The $n$-dimensional {\it varietal hypercube} $VQ_n$ is the labeled
graph defined recursively as follows. $VQ_1$ is the complete graph
of two vertices labeled with $0$ and $1$, respectively. Assume that
$VQ_{n-1}$ has been constructed. Let $VQ^0_{n}$ (resp. $VQ^1_{n}$)
be a labeled graph obtained from $VQ_{n-1}$ by inserting a zero
(resp. $1$ ) in front of each vertex-labeling in $VQ_{n-1}$. For
$n>1$, $VQ_n$ is obtained by joining vertices in $VQ^0_{n}$ and
$VQ^1_{n}$, according to the rule: a vertex
$X_n=0x_{n-1}x_{n-2}x_{n-3}\cdots x_2x_1$ in $VQ^0_{n}$ and a vertex
$Y_n=1y_{n-1}y_{n-2}y_{n-3}\cdots y_2y_1$ in $VQ^1_{n}$ are adjacent
in $VQ_n$ if and only if

1. $x_{n-1}x_{n-2}x_{n-3}\cdots x_2x_1=y_{n-1}y_{n-2}y_{n-3}\cdots
y_2y_1$ if $n\ne 3k$, or

2. $x_{n-3}\cdots x_2x_1=y_{n-3}\cdots y_2y_1$ and $(x_{n-1}x_{n-2},
y_{n-1}y_{n-2})\in I$ if $n=3k$, where
$I=\{(00,00),(01,01),(10,11),(11,10)\}$.

Figure~\ref{f1} shows the examples of varietal hypercubes $VQ_n$ for
$n=1, 2, 3$ and $4$.

\vskip30pt

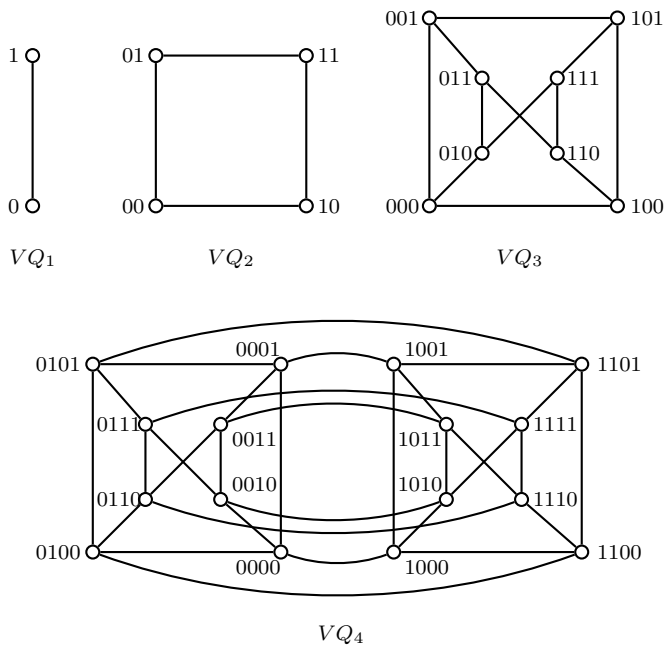
\begin{figure*}[ht]
\begin{pspicture}(-2.3,-.5)(1,3)
\cnode(1,1){.1}{0}\rput(.75,1){\scriptsize0}
\cnode(1,3){.1}{1}\rput(.75,3){\scriptsize1}
\ncline{0}{1}\rput(1,.3){\scriptsize$VQ_1$}
\end{pspicture}
\begin{pspicture}(-.5,-.5)(3,3)
\cnode(1,1){.1}{00}\rput(.7,1){\scriptsize00}
\cnode(1,3){.1}{01}\rput(.7,3){\scriptsize01}
\cnode(3,1){.1}{10}\rput(3.3,1){\scriptsize10}
\cnode(3,3){.1}{11}\rput(3.3,3){\scriptsize11}
\ncline{00}{01}\ncline{01}{11}\ncline{11}{10}\ncline{10}{00}
\rput(2,.3){\scriptsize$VQ_2$}
\end{pspicture}
\begin{pspicture}(-.5,-.5)(3,3)
\cnode(1,1){.1}{000}\rput(.64,1){\scriptsize000}
\cnode(1,3.5){.1}{001}\rput(.64,3.5){\scriptsize001}
\cnode(3.5,1){.1}{100}\rput(3.9,1){\scriptsize100}
\cnode(3.5,3.5){.1}{101}\rput(3.9,3.5){\scriptsize101}
\cnode(1.7,1.7){.1}{010}\rput(1.35,1.7){\scriptsize010}
\cnode(1.7,2.7){.1}{011}\rput(1.35,2.7){\scriptsize011}
\cnode(2.7,1.7){.1}{110}\rput(3.05,1.7){\scriptsize110}
\cnode(2.7,2.7){.1}{111}\rput(3.05,2.7){\scriptsize111}
\ncline{000}{001}\ncline{001}{101}\ncline{101}{100}\ncline{100}{000}
\ncline{010}{011}\ncline{011}{110}\ncline{111}{110}\ncline{111}{010}
\ncline{000}{010}\ncline{001}{011}\ncline{101}{111}\ncline{100}{110}
\rput(2.2,.3){\scriptsize$VQ_3$}
\end{pspicture}

\vskip2pt
\begin{pspicture}(-3.1,0)(8,4)

\cnode(1,1){.1}{0100}\rput(.54,1){\scriptsize0100}%
\cnode(1,3.5){.1}{0101}\rput(.54,3.5){\scriptsize0101}%
\cnode(3.5,1){.1}{0000}\rput(3.2,0.8){\scriptsize0000}%
\cnode(3.5,3.5){.1}{0001}\rput(3.2,3.7){\scriptsize0001}%
\cnode(1.7,1.7){.1}{0110}\rput(1.35,1.7){\scriptsize0110}%
\cnode(1.7,2.7){.1}{0111}\rput(1.35,2.7){\scriptsize0111}%
\cnode(2.7,1.7){.1}{0010}\rput(3.15,1.9){\scriptsize0010}%
\cnode(2.7,2.7){.1}{0011}\rput(3.15,2.5){\scriptsize0011}

\ncline{0000}{0001}\ncline{0001}{0101}\ncline{0101}{0100}\ncline{0100}{0000}
\ncline{0010}{0011}\ncline{0011}{0110}\ncline{0111}{0110}\ncline{0111}{0010}
\ncline{0000}{0010}\ncline{0001}{0011}\ncline{0101}{0111}\ncline{0100}{0110}

\cnode(5,1){.1}{1000}\rput(5.44,0.8){\scriptsize1000}
\cnode(5,3.5){.1}{1001}\rput(5.44,3.7){\scriptsize1001}
\cnode(7.5,1){.1}{1100}\rput(8.,1){\scriptsize1100}
\cnode(7.5,3.5){.1}{1101}\rput(8.0,3.5){\scriptsize1101}
\cnode(5.7,1.7){.1}{1010}\rput(5.35,1.9){\scriptsize1010}
\cnode(5.7,2.7){.1}{1011}\rput(5.35,2.5){\scriptsize1011}
\cnode(6.7,1.7){.1}{1110}\rput(7.15,1.7){\scriptsize1110}
\cnode(6.7,2.7){.1}{1111}\rput(7.15,2.7){\scriptsize1111}

\ncline{1000}{1001}\ncline{1001}{1101}\ncline{1101}{1100}\ncline{1100}{1000}
\ncline{1010}{1011}\ncline{1011}{1110}\ncline{1111}{1110}\ncline{1111}{1010}
\ncline{1000}{1010}\ncline{1001}{1011}\ncline{1101}{1111}\ncline{1100}{1110}

\nccurve[angleA=20,angleB=160]{0101}{1101}
\nccurve[angleA=20,angleB=160]{0111}{1111}
\nccurve[angleA=-20,angleB=-160]{0110}{1110}
\nccurve[angleA=-20,angleB=-160]{0100}{1100}
\nccurve[angleA=20,angleB=160]{0001}{1001}
\nccurve[angleA=20,angleB=160]{0011}{1011}
\nccurve[angleA=-20,angleB=-160]{0010}{1010}
\nccurve[angleA=-20,angleB=-160]{0000}{1000}
\rput(4.3,-0.1){\scriptsize$VQ_4$}

\end{pspicture}

\caption{\label{f1}{\footnotesize \ The varietal hypercubes $VQ_1,
VQ_2, VQ_3$ and $VQ_4$}}
\end{figure*}

Like $Q_n$, $VQ_n$ is an $n$-regular graph with $2^n$ vertices and
$n2^{n-1}$ edges, and has many properties similar or superior to
$Q_n$. For example, the connectivity and restricted connectivity of
$VQ_n$ and $Q_n$ are the same (see Wang and Xu~\cite{wx09}), while,
all the diameter and the average distance, fault-diameter and
wide-diameter of $VQ_n$ are smaller than that of the hypercube (see
Cheng and Chuang~\cite{cc94}, Jiang {\it et al.}~\cite{jhl10}). Very
recently, Cao {\it et al}.~\cite{cxx12} have shown that $VQ_n$ has
better pancyclicity and panconnectvity than $Q_n$.

An {\it automorphism} of a graph $G$ is a permutation $\sigma$ on
$V(G)$ satisfying the adjacency-preserving condition
$$
xy\in E(G) \Leftrightarrow \sigma(x)\sigma(y)\in E(G).
$$
Under the operation of composition, the set of all automorphisms of
$G$ forms a group, denoted by $Aut(G)$ and referred to as the {\it
automorphism group} of $G$.

A graph $G$ is {\it vertex-transitive} if for any given two vertices
$x$ and $y$ in $G$ there is some $\sigma\in Aut(G)$ such that
$y=\sigma(x)$. A graph $G$ is called to be {\it edge-transitive} if
for any two given edges $xy$ and $uv$ of $G$ there is some $\sigma
\in Aut(G)$ such that $uv=\sigma(x)\sigma(y)$.

It has known that $Q_n$ is vertex-transitive and edge-transitive,
the folded hypercube $FQ_n$ is vertex-transitive (see Ma and
Xu~\cite{mx10}, the crossed hypercube $CQ_n$ is not
vertex-transitive for $n\geqslant 5$ (see Kulasinghe and
Bettayeb~\cite{kb95}). However, transitivity of some variant of the
hypercube has not been investigated. In this paper, we consider
transitivity of $VQ_n$. Choose three vertices $X=0101, Y=1101$ and
$Z=0001$ in $VQ_4$ (see Figure~\ref{f1}), the edge $XZ$ is contained
in a cycle of length $5$, but the edge $XY$ is not. This fact shows
that there is no $\sigma \in Aut(VQ_4)$ such that
$XZ=\sigma(X)\sigma(Y)$. However, we can show that $VQ_n$ is
vertex-transitive. This property shows that when $VQ_n$ is used to
model an interconnection network, it is high symmetrical and
obviously superior to other variants of the hypercube, such as the
crossed cube $CQ_n$.

\section{Main results}

For convenience, we express $VQ_n$ as $VQ^0_{n}\odot VQ^1_{n}$,
where $VQ^0_{n}\cong VQ^1_{n}\cong VQ_{n-1}$. Edges between
$VQ^0_{n}$ and $VQ^1_{n}$ are called {\it $n$-transversal edges},
the edges of Type $2$ are called {\it crossing edges} when
$(x_{n-1}x_{n-2}, y_{n-1}y_{n-2})\in \{(10,11),(11,10)\}$, and all
the other edges are called {\it normal edges}. For a given position
integer $n$, let $I_{n}=\{1,2,\ldots, n\}$ and let
$$
 V_n=\{x_{n}\cdots x_{2}x_1|\ x_i\in \{0,1\}, i\in
 I_{n}\}.
 $$
Clearly, $V(VQ_n)=V_n$. For a given $X_n=x_{n}\cdots x_{2}x_1\in
V_n$, let $X_i=x_i\cdots x_2x_1$. For $b\in\{0,1\}$, let
$\bar{b}=\{0,1\}\setminus\{b\}$. By definitions, we immediately
obtain the following simple observation.

\begin{obs}\label{obs2.1}\
Let $VQ_n=VQ^0_{n}\odot VQ^1_{n}$ and $X_nY_n$ be an $n$-transversal
edge in $VQ_n$, where $X_n\in VQ^0_{n}$ and $Y_n\in VQ^1_{n}$. For
$n\ge 3$, if $X_n=0abx_{n-3}\cdots x_1$, then $Y_n=1a'b'X_{n-3}$,
where $ab=a'b'$ if $X_nY_n$ is a normal edge, and
$(ab,a'b')=(1b,1\bar{b})$ if $X_nY_n$ is a crossing edge.
\end{obs}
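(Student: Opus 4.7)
The plan is to unpack the definition of the adjacency relation given in the construction of $VQ_n$ and verify the claim by splitting on whether $n=3k$. Since $X_n\in VQ_n^0$ and $Y_n\in VQ_n^1$, I can already write $X_n=0abx_{n-3}\cdots x_1$ and $Y_n=1cdy_{n-3}\cdots y_1$ for some bits $a,b,c,d$; the job is to identify what $c,d$ and $y_{n-3}\cdots y_1$ must be in each of the two defining cases.

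First I would handle the case $n\neq 3k$. Here the only rule available is rule~1, which forces $x_{n-1}\cdots x_1=y_{n-1}\cdots y_1$. Reading off positions $n-1,n-2$ gives $a=c$, $b=d$, i.e.\ $ab=a'b'$, and reading off the remaining positions gives $y_{n-3}\cdots y_1=x_{n-3}\cdots x_1=X_{n-3}$. By the paper's convention this edge is a normal edge, so the claim holds.

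Next I would treat the case $n=3k$, where rule~2 applies: the low bits must match, $x_{n-3}\cdots x_1=y_{n-3}\cdots y_1=X_{n-3}$, and $(ab,cd)=(x_{n-1}x_{n-2},y_{n-1}y_{n-2})$ must lie in $I=\{(00,00),(01,01),(10,11),(11,10)\}$. I would inspect the four elements of $I$ one by one. The first two, $(00,00)$ and $(01,01)$, give $ab=a'b'$ and are by definition normal edges. The remaining two, $(10,11)$ and $(11,10)$, are the crossing edges; both can be written uniformly as $(ab,a'b')=(1b,1\bar{b})$, since in each the first coordinate has leading bit $1$ and the second coordinate has leading bit $1$ with the final bit flipped. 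This covers every $n$-transversal edge and matches the statement exactly.

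There is no real obstacle here: the observation is a direct translation of the defining rules of $VQ_n$ into the compact notation $X_{n-3}=x_{n-3}\cdots x_1$ and $(ab,a'b')=(1b,1\bar b)$, so the argument is purely a case check against the set $I$ together with the dichotomy $n=3k$ versus $n\neq 3k$. The only thing to be careful about is keeping the labelling of normal versus crossing edges consistent with the definition introduced just before the observation.
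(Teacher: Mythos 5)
Your case check is correct and is exactly the verification the paper has in mind: the paper gives no explicit proof, stating that the observation follows immediately from the definitions, and your unpacking of rule 1 ($n\neq 3k$) versus rule 2 with the set $I$ ($n=3k$), together with the normal/crossing labelling, is precisely that immediate argument.
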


\begin{lem}\label{lem2.2}\ Define a mapping
 \begin{equation}\label{e2.1}
 \begin{array}{rl}
 \sigma_1: &  V_n \, \to\ V_n\\
    & X_n \mapsto\ \bar{x}_nX_{n-1}.
 \end{array}
 \end{equation}
Then $\sigma_1\in Aut(VQ_{n})$.
\end{lem}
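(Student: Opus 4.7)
The plan is to observe first that $\sigma_1$ is an involution on $V_n$ (flipping $x_n$ twice recovers the original string), hence a bijection; so it suffices to verify that $X_nY_n\in E(VQ_n)$ implies $\sigma_1(X_n)\sigma_1(Y_n)\in E(VQ_n)$. I would split the edges of $VQ_n=VQ^0_n\odot VQ^1_n$ into two classes: those internal to one of the halves $VQ^b_n$, and the $n$-transversal edges.

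For an internal edge, both endpoints share the leading bit $b$, so they have the form $bX_{n-1}$ and $bY_{n-1}$ with $X_{n-1}Y_{n-1}\in E(VQ_{n-1})$. The map $\sigma_1$ sends this pair to $\bar bX_{n-1}$ and $\bar bY_{n-1}$, which is an edge of $VQ^{\bar b}_n$ because the relabeling $X_{n-1}\mapsto \bar bX_{n-1}$ is precisely the isomorphism $VQ_{n-1}\to VQ^{\bar b}_n$ used in the recursive definition. So internal edges go to internal edges of the opposite half.

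The main case is the $n$-transversal edges, handled by Observation~\ref{obs2.1}. Write $X_n=0abX_{n-3}\in VQ^0_n$ and $Y_n=1a'b'X_{n-3}\in VQ^1_n$ with a common suffix $X_{n-3}$. Then $\sigma_1(X_n)=1abX_{n-3}$ and $\sigma_1(Y_n)=0a'b'X_{n-3}$, so the image pair is again a candidate $n$-transversal edge, now with $VQ^0_n$-endpoint having middle bits $a'b'$ and $VQ^1_n$-endpoint having middle bits $ab$. If the original edge was normal (either $n\neq 3k$, in which case only the common suffix $X_{n-3}$ plus the matching $(n{-}1)$-bit suffixes matter, or $ab=a'b'$), then the image is plainly normal as well, since $\sigma_1$ fixes all bits below $x_n$. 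If the original edge was crossing, so $(ab,a'b')=(1b,1\bar b)$, then the image pair of middle bits is $(1\bar b,1b)$, which is $(11,10)$ when $b=0$ and $(10,11)$ when $b=1$; both belong to $I$, so $\sigma_1(X_n)\sigma_1(Y_n)$ is again a crossing edge.

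The step I expect to be the principal obstacle is precisely this crossing-edge subcase, since it is the only place where the definition of $VQ_n$ is not symmetric in the two halves in an obvious way; the verification rests on the fact that $I$ contains both ordered pairs $(10,11)$ and $(11,10)$, which is exactly the symmetry needed when $\sigma_1$ swaps the roles of $VQ^0_n$ and $VQ^1_n$. Since adjacency is preserved in all cases and $\sigma_1$ is a bijection, $\sigma_1\in Aut(VQ_n)$.
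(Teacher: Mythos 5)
Your proof is correct and follows essentially the same route as the paper: establish bijectivity, then check that internal edges, normal $n$-transversal edges, and crossing edges are each preserved, with the crossing case resting (as in the paper) on $I$ containing both $(10,11)$ and $(11,10)$. The only cosmetic differences are that you treat both halves symmetrically rather than fixing $x_n=0$ without loss of generality, and you note explicitly that a normal transversal edge is mapped onto itself.
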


\begin{proof}\ Clearly, $\sigma_1$ is a permutation on $V_n$.
We only need to show that $\sigma_1$ preserves the adjacency of
vertices in $VQ_n$. To the end, let $X_nY_n\in E(VQ_n)$, where
$X_n=x_nx_{n-1}\cdots x_{2}x_1$ and $Y_n=y_ny_{n-1}\cdots y_{2}y_1$.
Without loss of generality, assume $x_n=0$. Then
$\sigma_1(X_n)=1X_{n-1}$ in $VQ^1_n$.

If $y_n=0$, then $X_{n}Y_{n}\in E(VQ^{0}_{n})$, and so
$X_{n-1}Y_{n-1}\in E(VQ_{n-1})$. Since $\sigma_1(Y_n)=1Y_{n-1}$ in
$VQ^1_n$, $\sigma_1(X_n)\sigma_1(Y_n)$ is an edge in $VQ^{1}_{n}$,
and so in $VQ_{n}$.

We now assume $y_n=1$. Then $Y_n$ is in $VQ^1_n$ and $\sigma_1(Y_n)$
is in $VQ^0_n$. Thus $X_nY_n$ is an $n$-transversal edge.

If $X_nY_n$ is a normal edge, then $X_{n-1}=Y_{n-1}$, and
$\sigma_1(X_n)\sigma_1(Y_n)=Y_nX_n\in E(VQ_n)$ clearly.

If $X_nY_n$ is a crossing edge, then $(x_{n-1}x_{n-2},
y_{n-1}y_{n-2})=(1x_{n-2},1\bar{x}_{n-2})$. That is,
$X=01x_{n-2}X_{n-3}$ and $Y=11\bar{x}_{n-2}X_{n-3}$, and so
 $$
 \begin{array}{rl}
 &\sigma_1(X_n)=11x_{n-2}X_{n-3}\ \ {\rm and}\\
 &\,\sigma_1(Y_n)=01\bar{x}_{n-2}X_{n-3},
 \end{array}
 $$
which shows that $\sigma_1(X_n)\sigma_1(Y_n)$ is a crossing edge in
$VQ_n$.

The lemma follows.\end{proof}

\begin{lem}\label{lem2.3}\
For a given $\phi\in Aut(VQ_{n-3})$, define a mapping $\varphi_i$
from $V_{n-1}$ to $V_{n-1}$ for each $i=0,1,2,3$ subjected to
 \begin{equation}\label{e2.2}
 \begin{array}{rl}
  & \varphi_0(X_{n-1})=x_{n-1}{x}_{n-2}\phi(X_{n-3})\\
  & \varphi_1(X_{n-1})=x_{n-1}\bar{x}_{n-2}\phi(X_{n-3})\\
  & \varphi_2(X_{n-1})=\bar{x}_{n-1}{x}_{n-2}\phi(X_{n-3})\\
  & \varphi_3(X_{n-1})=\bar{x}_{n-1}\bar{x}_{n-2}\phi(X_{n-3}).
 \end{array}
 \end{equation}
If $n=3k$, then $\varphi_i\in Aut(Q_{n-1})$ for each $i=0,1,2,3$.
\end{lem}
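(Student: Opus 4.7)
The plan is to show each $\varphi_i$ is a bijection of $V_{n-1}$ that preserves adjacency in $VQ_{n-1}$ (the statement's $Q_{n-1}$ is evidently a typo for $VQ_{n-1}$). Bijectivity is immediate, since $\varphi_i$ acts on the top two coordinates $x_{n-1}x_{n-2}$ by one of the four bijections of $\{0,1\}^2$ and on the remaining $n-3$ coordinates by the bijection $\phi$.

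The structural observation that drives the argument is that the hypothesis $n=3k$ makes both $n-1$ and $n-2$ non-multiples of $3$. Unpacking the recursive definition of $VQ_{n-1}$, and inside each half the recursive definition of $VQ_{n-2}$, this means every $(n-1)$-transversal edge and every $(n-2)$-transversal edge is a \emph{normal} edge in the sense of Observation~\ref{obs2.1}. Consequently the edges of $VQ_{n-1}$ fall into three classes: (a) the $(n-1)$-transversal edges, joining $0x_{n-2}X_{n-3}$ to $1x_{n-2}X_{n-3}$; (b) the $(n-2)$-transversal edges inside a half, joining $x_{n-1}0X_{n-3}$ to $x_{n-1}1X_{n-3}$; and (c) edges inherited from the four sub-copies of $VQ_{n-3}$, joining $x_{n-1}x_{n-2}X_{n-3}$ to $x_{n-1}x_{n-2}Y_{n-3}$ for $X_{n-3}Y_{n-3}\in E(VQ_{n-3})$. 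Any crossing edges that $VQ_{n-1}$ contains (which can only occur at the internal level $n-3=3(k-1)$) lie entirely inside class (c), where they are absorbed by the assumption $\phi\in Aut(VQ_{n-3})$.

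Checking that each $\varphi_i$ preserves each of the three edge classes is then routine. For an edge of class (a) or (b), the two endpoints share the lower $n-3$ bits and differ in exactly one of the top two bits; $\varphi_i$ replaces the common lower part by $\phi(X_{n-3})$, while its action on the top two bits is a bijection of $\{0,1\}^2$, so the images still differ only in the same flipped position and are joined by an edge of the same class. For an edge of class (c), the top two bits agree on the two endpoints and the lower parts form an edge of $VQ_{n-3}$; $\varphi_i$ preserves the agreement in the top two bits, and $\phi(X_{n-3})\phi(Y_{n-3})\in E(VQ_{n-3})$, so the image is again an edge of class (c).

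The only real conceptual obstacle is recognizing why the hypothesis $n=3k$ is essential: it is precisely what guarantees that neither the top level nor the next level of $VQ_{n-1}$ carries crossing edges, which in turn is what allows the action on the top two bits to be decoupled from the $\phi$-action on the lower $n-3$ bits. Without this hypothesis, a $\varphi_i$ that flips $x_{n-2}$ (like $\varphi_1$ or $\varphi_3$) would in general destroy a crossing edge at level $n-2$, and the lemma would fail.
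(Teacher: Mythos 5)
Your proof is correct and follows essentially the same route as the paper: the same trichotomy of edges of $VQ_{n-1}$ (the $(n-1)$-transversal edges, the $(n-2)$-transversal edges inside each half, and the edges inside the four $VQ_{n-3}$-copies), with $n=3k$ guaranteeing that both transversal levels carry only normal edges; the only difference is that the paper verifies $\varphi_1,\varphi_2$ directly and then obtains $\varphi_3=\varphi_1\varphi_2$ and $\varphi_0=\varphi_3^2$ by composition, whereas you check all four maps uniformly. One small wording point: being ``a bijection of $\{0,1\}^2$'' is not by itself enough to preserve which of the two top bits differs---what you are really using is that each $\varphi_i$ acts on $x_{n-1}x_{n-2}$ by complementing a fixed subset of the two positions, and such a map does preserve the difference pattern.
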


\begin{proof}\
We first show $\varphi_1,\varphi_2\in Aut(Q_{n-1})$. To the end, we
only need to show that both $\varphi_1$ and $\varphi_2$ preserve the
adjacency of vertices in $VQ_{n-1}$. Let $X_{n-1}Y_{n-1}\in
E(VQ_{n-1})$, where $X_{n-1}=x_{n-1}\cdots x_{2}x_1$ and
$Y_{n-1}=y_{n-1}\cdots y_{2}y_1$. We want to show that
$\varphi_i(X_{n-1})\varphi_i(Y_{n-1})\in E(VQ_{n-1})$ for each
$i=1,2$.

Without loss of generality, assume $x_{n-1}x_{n-2}=0b$, where
$b\in\{0,1\}$. Then $\varphi_1(X_{n-1})=0\bar{b}\phi(X_{n-3})$ is in
$VQ^0_{n-1}$ and $\varphi_2(X_{n-1})=1{b}\phi(X_{n-3})$ is in
$VQ^1_{n-1}$. Note $n-1\ne 3k$ since $n=3k$. There are two cases
according as $y_{n-1}=0$ or $1$.

\vskip6pt

{\bf Case 1.}\ $y_{n-1}=0$. Then $X_{n-1}Y_{n-1}\in E(VQ^0_{n-1})$.

\begin{enumerate}
  \item
If $y_{n-2}=b$, then $X_{n-1}Y_{n-1}\in E(VQ^{0b}_{n-1})$, where
$VQ^{0b}_{n-1}$ denotes a subgraph of $VQ_{n-1}$ obtained from
$VQ_{n-3}$ by inserting two digits $0b$ in front of each
vertex-labeling in $VQ_{n-3}$, which is isomorphic to $VQ_{n-3}$.
Thus, $X_{n-3}Y_{n-3}\in E(VQ_{n-3})$. Since $\phi\in
Aut(VQ_{n-3})$, $\phi(X_{n-3})\phi(Y_{n-3})\in E(VQ_{n-3})$. Thus,
two vertices $0b\phi(X_{n-3})$ and $0b\phi(Y_{n-3})$ are adjacent in
$VQ^{0b}_{n-1}$, and so two vertices
 $$
 \begin{array}{rl}
 &\varphi_1(X_{n-1})=0\bar{b}\phi(X_{n-3})\ {\rm and}\\
 &\, \varphi_1(Y_{n-1})=0\bar{b}\phi(Y_{n-3})
 \end{array}
 \ \ \left({\rm resp.}
\begin{array}{rl}
 &\varphi_2(X_{n-1})=1{b}\phi(X_{n-3})\ {\rm and}\\
 &\, \varphi_2(Y_{n-1})=1{b}\phi(Y_{n-3})
 \end{array}\right)
 $$
are adjacent in $VQ^{0\bar{b}}_{n-1}$ (resp. $VQ^{1{b}}_{n-1}$),
that is, $\varphi_i(X_{n-1})\varphi_i(Y_{n-1})\in E(VQ_{n-1})$ for
each $i=1,2$.

  \item
If $y_{n-2}=\bar{b}$, then $X_{n-1}Y_{n-1}$ is an
$(n-2)$-transversal normal edge in $VQ^0_{n-1}$ since $n-2\ne 3k$.
Thus, $X_{n-3}=Y_{n-3}$, and so $\phi(X_{n-3})=\phi(Y_{n-3})$. Since
 $$
 \begin{array}{rl}
 &\varphi_1(X_{n-1})=0\bar{b}\phi(X_{n-3})\ {\rm and}\\
 &\,\varphi_1(Y_{n-1})=0b\phi(X_{n-3}),
\end{array}
 \ \ \left({\rm resp.}
\begin{array}{rl}
 &\varphi_2(X_{n-1})=1{b}\phi(X_{n-3})\ {\rm and}\\
 &\,\varphi_2(Y_{n-1})=1\bar{b}\phi(X_{n-3}),
\end{array}\right)
 $$
$\varphi_1(X_{n-1})\varphi_1(Y_{n-1})$ (resp.
$\varphi_2(X_{n-1})\varphi_2(Y_{n-1})$) is also an
$(n-2)$-transversal normal edge in $VQ^0_{n-1}$ (resp.
$VQ^1_{n-1}$).
\end{enumerate}

\vskip6pt

{\bf Case 2.}\ $y_{n-1}=1$.

Then $X_{n-1}Y_{n-1}$ is an $(n-1)$-transversal normal edge in
$VQ_{n-1}$ since $n-1\ne 3k$, and so $X_{n-2}=Y_{n-2}$. Since
$\phi(X_{n-3})=\phi(Y_{n-3})$, we have that
 $$
 \begin{array}{rl}
 &\varphi_1(X_{n-1})=0\bar{b}\phi(X_{n-3})\ {\rm and}\\
 &\,\varphi_1(Y_{n-1})=1\bar{b}\phi(X_{n-3}),
\end{array}
 \ \ \left({\rm resp.}
\begin{array}{rl}
 &\varphi_2(X_{n-1})=1{b}\phi(X_{n-3})\ {\rm and}\\
 &\,\varphi_2(Y_{n-1})=0{b}\phi(X_{n-3}),
\end{array}\right)
 $$
which implies that $\varphi_i(X_{n-1})\varphi_i(Y_{n-1})$ is an
$(n-1)$-transversal edge in $VQ_{n-1}$ for each $i=1,2$.

\vskip6pt

Thus, $\varphi_i\in Aut(Q_{n-1})$ for each $i=1,2$. Since
$\varphi_3=\varphi_1\varphi_2$ and $\varphi_0=\varphi_3^2$, we have
$\varphi_3, \varphi_0\in Aut(Q_{n-1})$ immediately. The lemma
follows.
\end{proof}

\begin{lem}\label{lem2.4}\
For a given $\varphi\in Aut(VQ_{n-1})$, define a mapping
 \begin{equation}\label{e2.3}
 \begin{array}{rl}
 \sigma_0: &  V_{n}\, \to\ V_{n}\\
 & X_{n} \mapsto\ x_{n}\varphi(X_{n-1}).
 \end{array}
 \end{equation}
When $n\ne 3k$, $\sigma_0\in Aut(VQ_n)$ for any  $\varphi\in
Aut(VQ_{n-1})$. When $n=3k$, if
 \begin{equation}\label{e2.4}
 \varphi=\left\{
 \begin{array}{l}
  \varphi_0\ {\rm or}\ \varphi_1\ {\rm or}\\
  \varphi_2\ {\rm when}\ x_n=1\ {\rm and}\ \varphi_3\ {\rm when}\ x_n=0\ {\rm or}\\
  \varphi_2\ {\rm when}\ x_n=0\ {\rm and}\ \varphi_3\ {\rm when}\ x_n=1,
  \end{array}\right.
 \end{equation}
where $\varphi_i$ is defined in (\ref{e2.2}) for each $i=0,1,2,3$,
then $\sigma_0\in Aut(VQ_n)$.
\end{lem}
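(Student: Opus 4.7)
The plan is to verify that $\sigma_0$ preserves adjacency in $VQ_n$. Because $\sigma_0$ fixes the leading bit $x_n$, it naturally splits into its action on $VQ^0_n$ and $VQ^1_n$; on the $(n-1)$-bit suffix it acts as the prescribed $\varphi$ (or, in the $n=3k$ case, as one of $\varphi_0,\varphi_1,\varphi_2,\varphi_3$, possibly a different one on the two halves). By hypothesis $\varphi$ is an automorphism of $VQ_{n-1}$ when $n\ne 3k$, and by Lemma~\ref{lem2.3} each $\varphi_i$ is an automorphism of $VQ_{n-1}$ when $n=3k$. Thus every edge lying inside $VQ^0_n$ or inside $VQ^1_n$ is automatically preserved, and the whole task reduces to checking that $\sigma_0$ sends each $n$-transversal edge to another $n$-transversal edge.

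For $n\ne 3k$, Observation~\ref{obs2.1} says every $n$-transversal edge is a normal one, of the form $X_n=0X_{n-1}$, $Y_n=1X_{n-1}$ with identical $(n-1)$-bit suffix. Then $\sigma_0(X_n)=0\varphi(X_{n-1})$ and $\sigma_0(Y_n)=1\varphi(X_{n-1})$ still share their suffix, so they span a normal $n$-transversal edge, proving the first claim.

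For $n=3k$, Observation~\ref{obs2.1} writes the edge as $X_n=0abX_{n-3}$, $Y_n=1a'b'X_{n-3}$ with $(ab,a'b')\in I$. Under $\sigma_0$ the low $n-3$ bits on both sides are replaced by the same string $\phi(X_{n-3})$, so the $(n-3)$-suffixes match automatically; the only question is the induced action on the pair $(ab,a'b')$ of top-two-bit strings. Each of the four options listed in (\ref{e2.4}) induces an explicit map on such pairs (the identity in the $\varphi_0$ case, flip the second coordinate of each entry in the $\varphi_1$ case, and two mixed bit-flip patterns in the two $\varphi_2/\varphi_3$ cases). The plan is to verify, for each of these four induced maps, that it permutes the four-element set $I$ by direct enumeration of the pairs $(00,00),(01,01),(10,11),(11,10)$.

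The main obstacle is exactly this final combinatorial check. It is short but explains the somewhat puzzling shape of condition (\ref{e2.4}): only these four combinations of $\varphi_0,\varphi_1,\varphi_2,\varphi_3$ preserve $I$ on the top two bits. For instance, applying $\varphi_2$ uniformly to both halves would send the pair $(00,00)$ to $(10,10)\notin I$, destroying the $n$-transversal edge. Once the four cases of (\ref{e2.4}) have been verified against the four elements of $I$, the lemma follows.
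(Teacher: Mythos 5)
Your proposal is correct and follows the same overall route as the paper: split the edges according to whether they lie inside $VQ^0_n$ or $VQ^1_n$ (where the hypothesis $\varphi\in Aut(VQ_{n-1})$, resp.\ Lemma~\ref{lem2.3}, settles the matter) or are $n$-transversal, and then check the transversal edges by hand, the $n\ne 3k$ case being immediate because transversal edges are exactly the equal-suffix pairs. The only real difference is how the transversal check for $n=3k$ is organized: the paper splits into crossing versus normal transversal edges and computes the images case by case from (\ref{e2.2}), whereas you reduce everything, via Observation~\ref{obs2.1}, to the single statement that the induced map on the top-two-bit pairs carries $I$ into itself, verified by enumerating its four elements; I checked that this enumeration does succeed for exactly the four combinations in (\ref{e2.4}) (and your counterexample for uniform $\varphi_2$, $(00,00)\mapsto(10,10)\notin I$, is right). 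Your uniform treatment is in fact slightly tighter at one spot: in the paper's crossing-edge subcase, the step ``WLOG $\varphi(X_{n-1})=10U_{n-3}$'' tacitly assumes the image edge stays in $VQ^1_{n-1}$, which holds for $\varphi_0,\varphi_1$ but not for the mixed $\varphi_2/\varphi_3$ options, where the image of a crossing edge is a normal transversal edge --- a case your $I$-enumeration covers automatically. Two small things to finish: actually write out the $4\times 4$ enumeration (it is the entire content of the $n=3k$ case), and record the routine remarks that $\sigma_0$ is a bijection and that a bijection of a finite graph sending edges to edges is an automorphism.
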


\begin{proof}\
It is easy to see that $\sigma_0$ is a permutation on $V_n$. We show
$\sigma_0\in Aut(VQ_n)$. To the end, we only need to prove that
$\sigma_0$ preserves the adjacency of vertices in $VQ_{n}$. Let
$VQ_n=VQ^0_n\odot VQ^1_n$ and let $X_n=x_nx_{n-1}\cdots x_{2}x_1$
and $Y_n=y_ny_{n-1}\cdots y_{2}y_1$ be any two adjacent vertices in
$VQ_n$. Without loss of generality, let $x_n=0$. Then $X_n$ is in
$VQ^0_n$. There are two cases according as $y_{n}=0$ or $1$.

\vskip6pt

{\bf Case 1.}\ $y_n=0$.

Since both $X_n$ and $Y_n$ are in $VQ^0_n$, for any $\varphi\in
Aut(VQ_{n-1})$, by (\ref{e2.3}) we have that
 $$
 \begin{array}{rl}
 &\sigma_0(X_n)= 0\varphi(X_{n-1})\ \ {\rm and}\\
 &\,\sigma_0(Y_n)= 0\varphi(Y_{n-1}).
 \end{array}
 $$
Since $X_nY_n\in E(VQ^0_{n})$, $X_{n-1}Y_{n-1}\in E(VQ_{n-1})$, and
so $\varphi(X_{n-1})\varphi(Y_{n-1})\in E(VQ_{n-1})$ and, hence,
$\varphi(X_n)\varphi(Y_n)\in E(VQ^0_{n})\subset E(VQ_n)$.

\vskip6pt

{\bf Case 2.}\ $y_n=1$.

In this case, $Y_n$ is in $VQ^1_n$ and $X_nY_n$ is an
$n$-transversal edge in $VQ_n$, which is either a normal edge or a
crossing edge.

\begin{enumerate}
  \item \ $X_nY_n$ is a crossing edge.

In this subcase, $n=3k$, $X_{n-3}=Y_{n-3}$, and
$X_n=01x_{n-2}X_{n-3}$ and $Y_n=11\bar{x}_{n-2}X_{n-3}$ by
Observation~\ref{obs2.1}.

Since $X_{n-1}=1x_{n-2}X_{n-3}$ and $Y_{n-1}=1\bar{x}_{n-2}X_{n-3}$,
$X_{n-1}Y_{n-1}$ is an $(n-2)$-dimensional normal edge in
$VQ^1_{n-1}$ since $n-1\ne 3k$. Thus, for any $\varphi\in
Aut(VQ_{n-1})$, $\varphi(X_{n-1})\varphi(Y_{n-1})$ is an
$(n-2)$-dimensional normal edge in $VQ^1_{n-1}$. Without loss of
generality, let $\varphi(X_{n-1})=10U_{n-3}$. Then
$\varphi(Y_{n-1})=11U_{n-3}$, and so,
$$
\begin{array}{rl}
 &\sigma_0(X_{n})=x_n\varphi(X_{n-1})=010U_{n-3}\\
 &\,\sigma_0(Y_{n})\,=y_n\,\varphi(Y_{n-1})\,=111U_{n-3}.
\end{array}
$$
Thus, $\sigma_0(X_{n})\sigma_0(Y_{n})$ is an $n$-dimensional
crossing edge in $VQ_{n}$.

  \item\  $X_nY_n$ is a normal edge.

In this subcase, $X_{n-1}=Y_{n-1}$, and so
$\varphi(X_{n-1})=\varphi(Y_{n-1})$ for any $\varphi\in
Aut(VQ_{n-1})$. By (\ref{e2.3}), we have that
 $$
 \begin{array}{rl}
 &\sigma_0(X_n)=0\varphi(X_{n-1})\ \ {\rm and}\\
 &\,\sigma_0(Y_n)=1\varphi(Y_{n-1}).
 \end{array}
 $$

If $n\ne 3k$, then $\sigma_0(X_n)\sigma_0(Y_n)\in E(VQ_n)$ is a
normal edge. Assume now $n=3k$. Then
$\{x_{n-1}x_{n-2},y_{n-1}y_{n-2})\in\{(00,00),(01,01)\}$.

\qquad If $\varphi=\varphi_0$ or $\varphi_1$, then
 $$
 \left\{\begin{array}{l}
 \sigma_0(X_{n-1})=0\varphi_0(X_{n-1})=0x_{n-1}{x}_{n-2}\phi(X_{n-3}),\ {\rm
 and}\\
 \,\sigma_0(Y_{n-1})\,=1\varphi_0(Y_{n-1})=1x_{n-1}{x}_{n-2}\phi(X_{n-3})
 \end{array}\right.\\
 $$
 or
 $$
 \left\{\begin{array}{l}
 \sigma_0(X_{n})=0\varphi_1(X_{n-1})=0x_{n-1}\bar{x}_{n-2}\phi(X_{n-3}),\ {\rm
 and}\\
 \,\sigma_0(Y_{n})\,=1\varphi_1(Y_{n-1})=1x_{n-1}\bar{x}_{n-2}\phi(X_{n-3}).
 \end{array}\right.
 $$
Thus, $\sigma_0(X_{n-1})\sigma_0(Y_{n-1})$ is an $n$-dimensional
normal edge in $VQ_n$.

\qquad If $\varphi=\varphi_2$ when $x_n=1$ and $\varphi=\varphi_3$
when $x_n=0$, then
 $$
 \left\{\begin{array}{l}
 \sigma_0(X_{n})=0\varphi_3(X_{n-1})=0\bar{x}_{n-1}\bar{x}_{n-2}\phi(X_{n-3}),\ {\rm
 and}\\
 \,\sigma_0(Y_{n})\,=1\varphi_2(Y_{n-1})=1\bar{x}_{n-1}{x}_{n-2}\phi(X_{n-3}).
 \end{array}\right.
 $$

\qquad If $\varphi=\varphi_2$ when $x_n=0$ and $\varphi=\varphi_3$
when $x_n=1$, then
 $$
 \left\{\begin{array}{l}
 \sigma_0(X_{n})=0\varphi_2(X_{n-1})=0\bar{x}_{n-1}{x}_{n-2}\phi(X_{n-3}),\ {\rm
 and}\\
 \,\sigma_0(Y_{n})\,=1\varphi_3(Y_{n-1})=1\bar{x}_{n-1}\bar{x}_{n-2}\phi(X_{n-3}).
 \end{array}\right.
 $$

Since $\{\bar{x}_{n-1}\bar{x}_{n-2},\bar{x}_{n-1}{x}_{n-2})\in
\{(11,10),(10,11)\}$, $\sigma_0(X_{n})$ and $\sigma_0(Y_{n})$ are
linked by an $n$-dimensional crossing edge in $VQ_n$.

\end{enumerate}

Thus, we have proved that $\sigma_0$ preserves the adjacency of
vertices in $VQ_{n}$, and so $\sigma_0\in Aut(VQ_n)$. The lemma
follows.
\end{proof}

\begin{thm}\label{thm}\
$VQ_n$ is vertex-transitive for any $n\geqslant 1$.
\end{thm}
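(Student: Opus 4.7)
The plan is to prove the theorem by induction on $n$. The base cases $n=1$ (where $VQ_1 = K_2$) and $n=2$ (where $VQ_2$ is a $4$-cycle) are immediate; I would also dispose of $n=3$ by direct inspection, so that when the inductive step is reached for $n \ge 4$ the hypothesis may be invoked on both $VQ_{n-1}$ and, when needed, $VQ_{n-3}$.

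For the inductive step, fix two vertices $X_n, Y_n \in V_n$. By Lemma~\ref{lem2.2}, $\sigma_1$ is an involution that swaps the two halves $VQ_n^0$ and $VQ_n^1$ while fixing the suffix, so after possibly pre-composing with $\sigma_1$ I may assume $x_n = y_n = 0$; that is, both vertices lie in $VQ_n^0$. It then suffices to produce $\sigma_0 \in Aut(VQ_n)$ that restricts to a suitable map on $VQ_n^0$, and Lemma~\ref{lem2.4} is the tool for building such a $\sigma_0$ by lifting an automorphism of $VQ_{n-1}$. When $n \ne 3k$, the lemma permits lifting any $\varphi \in Aut(VQ_{n-1})$; by the induction hypothesis on $VQ_{n-1}$, pick $\varphi$ with $\varphi(X_{n-1}) = Y_{n-1}$, and then $\sigma_0(X_n) = 0\,\varphi(X_{n-1}) = Y_n$ as required.

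The hard part is the case $n = 3k$, in which Lemma~\ref{lem2.4} only authorizes lifts built from the restricted family $\varphi_0, \varphi_1, \varphi_2, \varphi_3$ of Lemma~\ref{lem2.3}, and for the $\varphi_2, \varphi_3$ sub-cases further demands a specific pairing on the two halves according to (\ref{e2.4}). The resolution invokes the induction hypothesis on $VQ_{n-3}$: choose $\phi \in Aut(VQ_{n-3})$ with $\phi(X_{n-3}) = Y_{n-3}$, and then read off from (\ref{e2.2}) that exactly one of $\varphi_0, \varphi_1, \varphi_2, \varphi_3$ sends $X_{n-1}$ to $Y_{n-1}$, according as $(y_{n-1}, y_{n-2})$ equals $(x_{n-1}, x_{n-2})$, $(x_{n-1}, \bar{x}_{n-2})$, $(\bar{x}_{n-1}, x_{n-2})$, or $(\bar{x}_{n-1}, \bar{x}_{n-2})$. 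If this selected map is $\varphi_0$ or $\varphi_1$, the first clause of (\ref{e2.4}) yields a valid $\sigma_0$ directly. If it is $\varphi_2$ (resp.\ $\varphi_3$), then because $x_n = 0$ the third (resp.\ second) clause of (\ref{e2.4}) furnishes a $\sigma_0$ whose restriction to $\{x_n = 0\}$ agrees with the chosen $\varphi_i$. In every subcase $\sigma_0(X_n) = Y_n$, completing the induction.
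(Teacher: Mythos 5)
Your proposal is correct and follows essentially the same route as the paper: induction with base cases $n=1,2,3$, the lifting lemmas for $\sigma_1$ and $\sigma_0$, and, in the $n=3k$ case, invoking the induction hypothesis on $VQ_{n-3}$ and selecting the unique $\varphi_i$ from (\ref{e2.2}) matched to the correct clause of (\ref{e2.4}). Your only organizational difference is normalizing both vertices into $VQ_n^0$ via $\sigma_1$ up front, which is the same device the paper uses in its Case 2 composition $\sigma_1\sigma_0$.
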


\begin{proof}\
We proceed by induction on $n\geqslant 1$. The conclusion is true
for each $n=1,2$ clearly. Since $VQ_3$ is isomorphic to a Cayley
graph $C(Z_8,\{1,4,7\})$ (see Huang and Xu~\cite{hx05}), the
conclusion is also true for $n=3$.

Assume the induction hypothesis for any positive integer fewer than
$n$ with $n\geqslant 4$, that is, for any $i$ with $1\leqslant
i\leqslant n-1$, $VQ_{i}$ is vertex-transitive.

Let $VQ_n=VQ^0_n\odot VQ^1_n$ be an $n$-dimensional varietal
hypercube with $n\geqslant 4$. To prove that $VQ_n$ is
vertex-transitive, we need to prove that for any two vertices $X_n$
and $Y_n$ in $VQ_n$ there is some $\sigma\in Aut(VQ_n)$ such that
$\sigma(X_n)=Y_n$. To the end, let $X_n=x_nx_{n-1}\cdots x_{2}x_1$
and $Y_n=y_ny_{n-1}\cdots y_{2}y_1$ be any two vertices in $VQ_n$.
By the induction hypothesis, there is some $\varphi\in
Aut(VQ_{n-1})$ such that $\varphi(X_{n-1})=Y_{n-1}$.

We now construct a $\sigma\in Aut(VQ_n)$ with $\sigma(X_n)=Y_n$.
Without loss of generality, let $x_n=0$. Then $X_n$ is in $VQ^0_n$.
There are two cases.

\vskip6pt

{\bf Case 1.}\ $y_n=0$.

In this case, both $X_n$ and $Y_n$ are in $VQ^0_n$. Assume first
$n\ne 3k$. By the induction hypothesis, there is some $\varphi\in
Aut(VQ_{n-1})$ such that $\varphi(X_{n-1})=Y_{n-1}$. Let
$\sigma=\sigma_0$, where $\sigma_0$ is defined in (\ref{e2.3}). By
Lemma~\ref{lem2.4}, $\sigma\in Aut(VQ_n)$, and
 $$
 \sigma(X_n)=0\varphi(X_{n-1})=0Y_{n-1}=Y_n.
 $$

Assume $n=3k$ below. Since $Y_{n-1}$ is one of the following four
forms:
 $$
 \begin{array}{rl}
  & Y_{n-1}=x_{n-1}{x}_{n-2}Y_{n-3}\\
  & Y_{n-1}=x_{n-1}\bar{x}_{n-2}Y_{n-3}\\
  & Y_{n-1}=\bar{x}_{n-1}{x}_{n-2}Y_{n-3}\\
  & Y_{n-1}=\bar{x}_{n-1}\bar{x}_{n-2}Y_{n-3}.
 \end{array}
 $$

By the induction hypothesis, there is some $\phi\in Aut(VQ_{n-3})$
such that $\phi(X_{n-3})=Y_{n-3}$. Let $\sigma=\sigma_0$, where
$\sigma_0\in Aut(VQ_n)$ is defined in (\ref{e2.4}).
 $$
 \sigma_0(X_{n})=\left\{
 \begin{array}{l}
  x_n\varphi_0(X_{n-1})=0x_{n-1}{x}_{n-2}\phi(X_{n-3})=Y_n\ {\rm if}\ Y_{n-1}=x_{n-1}{x}_{n-2}Y_{n-3}\\
  x_n\varphi_1(X_{n-1})=0x_{n-1}\bar{x}_{n-2}\phi(X_{n-3})=Y_n\ {\rm if}\ Y_{n-1}=x_{n-1}\bar{x}_{n-2}Y_{n-3}\\
  0\varphi_2(X_{n-1})=0\bar{x}_{n-1}{x}_{n-2}\phi(X_{n-3})=Y_n\ {\rm if}\ Y_{n-1}=\bar{x}_{n-1}{x}_{n-2}Y_{n-3}\\
  0\varphi_3(X_{n-1})=0\bar{x}_{n-1}\bar{x}_{n-2}\phi(X_{n-3})=Y_n\ {\rm
  if}\ Y_{n-1}=\bar{x}_{n-1}\bar{x}_{n-2}Y_{n-3}.
    \end{array}\right.
 $$

{\bf Case 2.}\ $y_n=1$.

In this case, $Y_n$ is in $VQ^1_n$. Consider $\sigma_1$ and
$\sigma_0$, defined in (\ref{e2.1}) and (\ref{e2.3}) respectively.
If $X_{n-1}=Y_{n-1}$, let $\sigma=\sigma_0$, then $\sigma\in
Aut(VQ_n)$, and $\sigma(X_n)=\sigma_0(X_n)=1X_{n-1}=Y_n$. If
$X_{n-1}\ne Y_{n-1}$, let $\sigma=\sigma_1\sigma_0$, then
$\sigma=\sigma_1\sigma_0\in Aut(VQ_n)$, and
 $$
 \sigma(X_n)=\sigma_1\sigma_0(X_n)=\sigma_1(x_n\varphi(X_{n-1}))=\sigma_1(0Y_{n-1})=1Y_{n-1}=Y_n.
 $$

Thus, we have proved that for any two vertices $X_n$ and $Y_n$ in
$VQ_n$ there is a $\sigma\in Aut(VQ_n)$ such that $\sigma(X_n)=Y_n$,
and so $VQ_n$ is vertex-transitive. The theorem follows.
\end{proof}


\begin{thebibliography}{s2}

\bibitem{cxx12}
J. Cao, L. Xiao and J.-M. Xu, Cycles and paths embedded in varietal
hypercubes. http://arxiv.org/abs/1211.4283.

\bibitem{cc94}
S.-Y. Cheng and J.-H. Chuang, Varietal hypercube - a new
interconnection networks topology for large scale multicomputer.
Prooceedings of International Conference on Parallel and Distributed
Srstems, 1994: 703-708.

\bibitem{hx05}
J. Huang and J.-M. Xu, Multiply-twisted hypercube with four or less
dimensions is vertex-transitive. Chinese Quarterly Journal
Mathematics, 20 (4) (2005), 430-434.

\bibitem{jhl10}
M. Jiang, X.-Y. Hu, Q.-L. Li, Fault-tolerant diameter and width
diameter of varietal hypercubes (in Chinese). Applied Mathematics -
Journal of Chinese University, Ser. A, 25 (3) (2010), 372-378.

\bibitem{kb95}
P. Kulasinghe and S. Bettayeb, Multiply-twisted hypercube with five
of More dimensions is not vertex-transitive. Information Processing
Letters, 53 (1995), 33-36.

\bibitem{mx10}
M.-J. Ma and J.-M. Xu, Algebraic properties and panconnectivity of
folded hypercubes. Ars Combinatoria, 95 (2010), 179-186.

\bibitem{wx09}
J.-W. Wang and J.-M. Xu, Reliability analysis of varietal hypercube
networks. Journal of University of Science and Technology of China,
39 (12) (2009), 1248-1252.

\bibitem{x03}
J.-M. Xu, Theory and Application of Graphs. Kluwer Academic
Publishers, Dordrecht/Boston/London, 2003.


\end{thebibliography}
\end{document}